\documentclass[a4paper,11pt, reqno]{amsart}

\usepackage{amsthm}
\usepackage{amssymb}
\usepackage{amsmath}
\usepackage{mathtools}
\usepackage{pdfpages}
\usepackage{changes}
\usepackage{float}
\usepackage{tikz-cd}
\usepackage{exscale,cite,color,amsopn}
\usepackage[colorlinks=true,pdftex,unicode=true,linktocpage,bookmarksopen,hypertexnames=false]{hyperref}
\usepackage{colortbl}

\renewcommand{\leq}{\leqslant}

\overfullrule=1mm

\DeclareMathOperator{\id}{id}

\DeclareMathOperator{\Ret}{Ret}

\DeclareMathOperator{\Sym}{Sym}

\newcommand{\N}{\mathbb{N}}
\newcommand{\Z}{\mathbb{Z}}

\newcommand{\D}{\mathbb{D}}
\newcommand{\GAP}{\textsf{GAP}}

\makeatletter
\numberwithin{equation}{section}
\numberwithin{figure}{section}
\numberwithin{table}{section}
\newtheorem{thm}{Theorem}[section]
\newtheorem*{thm*}{Theorem}
\newtheorem{lem}[thm]{Lemma}

\theoremstyle{definition} 

\newtheorem{convention}[thm]{Convention}
\newtheorem{rem}[thm]{Remark}
\newtheorem{exa}[thm]{Example}

\makeatother

\title[Enumeration of solutions]{Enumeration of set-theoretic solutions to the Yang--Baxter equation}
\author{\"O. Akg\"un, M. Mereb, L. Vendramin}

\address[M. Mereb and L. Vendramin]{IMAS--CONICET and Depto. de Matem\'atica, FCEN, Universidad de Buenos Aires, Pab.~1,
Ciudad Universitaria, C1428EGA, Buenos Aires, Argentina}
\email{lvendramin@dm.uba.ar}
\email{mmereb@dm.uba.ar}

\address[L. Vendramin]{Department of Mathematics, Vrije Universiteit Brussel, Pleinlaan 2, 1050 Brussel, Belgium}
\email{Leandro.Vendramin@vub.be}

\address{School of Computer Science, University of St Andrews, St Andrews, Fife KY16 9SX, UK.}
\email{ozgur.akgun@st-andrews.ac.uk}

\subjclass[2010]{Primary:16T25; Secondary: 81R50}
\keywords{Yang-Baxter, biquandles, constraint programming}

\begin{document}

\begin{abstract}
	We use Constraint Satisfaction methods to enumerate and construct
	set-theoretic solutions to the Yang--Baxter equation of small size. We show that there
	are 321,931 involutive solutions of size nine, 4,895,272 involutive solutions 
	of size ten and 422,449,480 non-involutive solution of size eight. 
	Our method is then used to enumerate non-involutive biquandles. 
\end{abstract}
 
\maketitle

\section{Introduction}

The Yang--Baxter equation (YBE) was 
first introduced in the field of statistical mechanics and for several decades it has been studied 
in mathematics and physics \cite{MR0290733,MR0261870}. Recent progress in set-theoretic solutions to the YBE 
shed new light on the importance of this 
equation in algebra and combinatorics \cite{MR3861714,MR2927367,MR1637256,MR1722951,MR3558231,MR1769723,MR3291816,MR2024436}.

A \emph{set-theoretic solution} to the YBE is a pair $(X,r)$, where
$X$ is a set and $r\colon X\times X\to X\times X$ 
is a bijective map such that 
\[
(r\times\id)(\id\times r)(r\times\id)=
(\id\times r)(r\times\id)(\id\times r)
\]
holds, where the juxtaposition denotes the usual composition of maps. Note that this is a functional equation in 
the space of maps $X^3\to X^3$, where $X^3=X\times X\times X$. 

\begin{exa}[permutation solutions over sets]
\label{exa:permutation}
If $X$ 
is a non-empty set and $\sigma\colon X\to X$ and $\tau\colon X\to X$ are 
bijections, 
then the pair $(X,r)$, where 
\[
r\colon X\times X\to X\times X,
\quad 
r(x,y)=(\sigma(y),\tau(x)),
\]
is a set-theoretical solution if and only if $\sigma$ and $\tau$ commute. Indeed, on the one hand, 
\begin{align*}
    (r\times\id)(\id\times r)(r\times\id)(x,y,z) &= 
    (r\times\id)(\id\times r)(\sigma(y),\tau(x),z)\\
    &=(r\times\id)(\sigma(y),\sigma(z),\tau^2(x))\\
    &=(\sigma^2(z),\tau\sigma(y),\tau^2(x))
\shortintertext{and, on the other hand,}
    (\id\times r)(r\times\id)(\id\times r)(x,y,z) &= 
    (\id\times r)(r\times\id)(x,\sigma(z),\tau(y))\\
    &=(\id\times r)(\sigma^2(z),\tau(x),\tau(y))\\
    &=(\sigma^2(z),\sigma\tau(y),\tau^2(x)).
\end{align*}
\end{exa}

We say that the solutions
$(X,r)$ and $(Y,s)$ are \emph{isomorphic} if there is a bijective map $f\colon X\to Y$ such that
\[
(f\times f)r=s(f\times f).
\]

From the combinatorial perspective certain types of solutions are particularly important; these are called non-degenerate solutions. 
By convention, if $(X,r)$ is a set-theoretic solution to the YBE, we write 
\[
	r(x,y)=(\sigma_x(y),\tau_y(x)).
\]
The solution $(X,r)$ is then said to be \emph{non-degenerate} if the maps $\sigma_x$ and
$\tau_x$ are bijective for all $x\in X$. 

\begin{convention}
	A \emph{solution} will always be a non-degenerate set-theoretic solution to
	the YBE. We will only consider finite solutions.
\end{convention}

Set-theoretic solutions to the YBE attracted a lot of attention and lead to
several interesting connections between group theory, ring theory and
combinatorics. The combinatorial version of the celebrated Yang--Baxter
equation was first formulated by Drinfel'd in~\cite{MR1183474} and considered 
later in~\cite{MR1722951,MR1637256} for involutive solutions and
in~\cite{MR1769723,MR1809284} for arbitrary solutions.  Set-theoretic solutions
are known to have deep connections with bijective $1$-cocycles, ordered 
groups, groups of I-type, regular subgroups, radical rings, skew braces, nil
rings, homology theory, Hopf--Galois extensions~\cite{MR3982254,MR3763907,MR4017867,MR3291816}.

The main result in this article is an explicit classification of solutions to
the YBE of small size.  This is achieved by using some combinatorial ideas
closely connected to the Yang--Baxter equation, Constraint Satisfaction methods~\cite{10.5555/1567016.1567043,10.1007/s10601-008-9047-y} and, in particular, the constraint modelling assistant Savile Row~\cite{nightingale2017automatically},
and the computational algebra package~\GAP~\cite{GAP4}.
Similar techniques have been used to enumerate semi-groups of order $\leq 10$, 
see~\cite{distler_semigroups_2012}.

The combination of these techniques allows us to build a huge database of involutive
and non-involutive solutions to the YBE, a good and useful source of examples
that gives an explicit and direct way to approach some open problems concerning the YBE. The database is freely 
available as a library for~\GAP~
at \url{https://github.com/vendramin/enumeration}, with DOI name \verb+10.5281/zenodo.5180745+.

In \cite{MR1722951} Etingof, Schedler and Soloviev constructed all involutive solutions of size $\leq8$. 
We summarize our findings on involutive solutions in the following statement. 

\begin{thm}\
\begin{enumerate}
    \item Up to isomorphism, there are 321,931 non-degenerate involutive set-theoretic
	solutions to the Yang--Baxter equation of size nine.
	    \item Up to isomorphism, there are 4,895,272 non-degenerate involutive set-theoretic
	solutions to the Yang--Baxter equation of size ten.
\end{enumerate}
\end{thm}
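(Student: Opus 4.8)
\emph{Proof strategy.} The statement is computational in nature, and the plan is to translate the defining conditions of a non-degenerate involutive solution into a finite constraint satisfaction problem, enumerate its solutions with aggressive symmetry breaking, and then certify the resulting counts by several independent checks. Throughout fix $X=\{1,\dots,n\}$ with $n\in\{9,10\}$.

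First I would set up the model. Writing $r(x,y)=(\sigma_x(y),\tau_y(x))$, non-degeneracy together with $r^2=\id$ forces $\tau_y(x)=\sigma_{\sigma_x(y)}^{-1}(x)$, so a solution is completely determined by the array $(\sigma_x(y))_{x,y\in X}$; equivalently one may pass to the cleaner cycle-set reformulation, in which a solution corresponds to a binary operation $\cdot$ on $X$ whose left translations $y\mapsto x\cdot y$ are bijective, which satisfies $(x\cdot y)\cdot(x\cdot z)=(y\cdot x)\cdot(y\cdot z)$ for all $x,y,z$, and for which $x\mapsto x\cdot x$ is bijective. Either way the CSP has $n^{2}$ decision variables ranging over $X$, the requirement "each row is a permutation" encodes non-degeneracy, involutivity contributes $O(n^{2})$ equality constraints, and the Yang--Baxter equation (respectively the cycle-set identity) contributes $O(n^{3})$ equality constraints over triples. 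This high-level model is compiled, via Savile Row, to a backend solver, and auxiliary Yang--Baxter-specific constraints — e.g.\ bounds on the size and structure of the associated permutation group $\langle\sigma_x:x\in X\rangle$, its orbit decomposition on $X$, and the left-brace / bijective $1$-cocycle description referred to in the introduction — are added to strengthen propagation.

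The decisive ingredient is isomorphism reduction: two arrays yield isomorphic solutions precisely when they lie in the same orbit of the conjugation action of $\Sym(X)$, and an unguided search over-counts by up to $n!$. I would break this symmetry by imposing lexicographic-leader constraints selecting the $\Sym(X)$-minimal representative of each orbit (or an equivalent canonical-form test), reinforced where possible by structural normalizations such as fixing a point or reducing to indecomposable solutions and recombining. The survivors are then post-processed in \GAP, which eliminates any residual isomorphic duplicates and outputs the totals $321931$ for $n=9$ and $4895272$ for $n=10$.

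Finally, correctness is argued by cross-validation: re-running the pipeline for $n\le 8$ must reproduce the counts already in the literature; every solution in the output is independently re-checked in \GAP\ to satisfy the YBE, non-degeneracy and $r^{2}=\id$; and running two distinct symmetry-breaking strategies must return the same total. The main obstacle is sheer scale — for $n=10$ even the space of row-wise-permutation arrays has size $(10!)^{10}$ — so the enumeration terminates only if propagation and symmetry breaking keep the search tree manageable, and one must verify with care that the symmetry-breaking constraints are \emph{sound and complete}, i.e.\ discard no orbit and retain exactly one representative from each, since any defect there would invalidate the count.
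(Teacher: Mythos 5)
Your overall pipeline---model the problem as a finite CSP in the cycle-set language, compile with Savile Row, break symmetry with lex-leader constraints, post-process and cross-validate in \GAP---is exactly the paper's approach in outline. But you are missing the one piece of actual mathematics that makes the computation feasible, and the symmetry-breaking scheme you propose in its place is the one the paper explicitly rejects as intractable. The paper's key device (Lemma~\ref{lem:trick}, going back to Plemmons) is that the diagonal map $T(x)=x\cdot x$ of a finite cycle set is a bijection, and that up to isomorphism one may fix $T$ to be a chosen representative of each conjugacy class of $\Sym_n$. This splits the enumeration into $p(9)=30$, resp.\ $p(10)=42$, independent subproblems, and---crucially---reduces the symmetry group acting on each subproblem from all of $\Sym_n$ to the centralizer $C_{\Sym_n}(T)$, which for most classes is small enough that complete lex-leader breaking is cheap. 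Your proposal instead imposes lex-leader constraints for the full conjugation action of $\Sym(X)$; with $10!$ group elements each yielding a lexicographic comparison of $100$-tuples, this is precisely the situation the paper describes as making ``the number of constraints \ldots extremely large,'' and there is no reason to believe the $n=10$ search would terminate. Your parenthetical ``fixing a point'' gestures in the right direction but is not the same normalization and does not by itself shrink the acting group below $\Sym_{n-1}$.

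Two smaller points. First, even after the reduction to centralizers, the paper does \emph{not} achieve sound-and-complete symmetry breaking inside the solver for the large centralizers (e.g.\ $T=\id$ or a transposition): it deliberately posts the constraint $M\leq_{\operatorname{lex}}M^{g}$ only for $g$ in a heuristically chosen subset $S\subseteq C_{\Sym_n}(T)$ (permutations moving few points plus a generating set) and removes the residual isomorphic duplicates afterwards. So the property you insist must be verified---that the constraints ``retain exactly one representative from each orbit''---is intentionally not enforced at the solving stage; only soundness (no orbit lost) is needed there, with completeness restored by post-hoc deduplication. Second, your cross-validation against the literature for $n\leq 8$ is a reasonable check but would have ``failed'' here: the paper's count of $34530$ for $n=8$ differs by two from the previously published figure, and the discrepancy was resolved in the paper's favour only after the original authors re-ran their code. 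A disagreement with prior counts is therefore not by itself evidence of an error in the pipeline.
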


Our methods can be easily adapted to construct {racks} of small size. 
Racks are particular types of solutions to the YBE that play a fundamental 
role in combinatorial knot theory, see Section \ref{racks}.   
Using the 16,023 isomorphism classes of racks of size eight,  
we obtain the following result for non-involutive solutions of size eight.

\begin{thm}
	There are 422,449,480 non-isomorphic non-degenerate non-involutive set-theoretic
	solutions to the Yang--Baxter equation of size eight. 
\end{thm}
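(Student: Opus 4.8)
The plan is to convert the statement into a finite — though very large — computation, organised around the classification of racks of size eight. The structural input is the well-known fact that every finite non-degenerate solution $(X,r)$, $r(x,y)=(\sigma_x(y),\tau_y(x))$, determines a \emph{derived solution} built from a rack structure $\triangleright$ on the underlying set $X$ together with a distinguished bijection $\mathfrak q\colon X\to X$ (the ``diagonal map'') compatible with $\triangleright$, and that $(X,r)$ is recovered from the pair $(\triangleright,\mathfrak q)$; moreover the construction is functorial, so two solutions are isomorphic exactly when their associated $(\mathrm{rack},\mathfrak q)$ data are isomorphic, and $(X,r)$ is involutive precisely when its derived rack is trivial. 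Hence the non-isomorphic non-degenerate non-involutive solutions of size eight are partitioned according to their (non-trivial) derived rack; since racks of size eight have been classified (there are $16023$ of them up to isomorphism), it suffices to enumerate, for each such rack $R$ and up to the action of $\Aut(R)$, the compatible diagonal maps $\mathfrak q$ producing a non-involutive solution, and then to add the $16023$ counts.

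For a fixed rack $R$ on $X=\{1,\dots,8\}$ this is a constraint satisfaction problem of modest size. One takes as unknowns either the single permutation $\mathfrak q$ or, more directly, the families $(\sigma_x)_{x\in X}$ and $(\tau_x)_{x\in X}$, encoded by finite-domain variables $\sigma_x(y),\tau_y(x)\in X$: the Yang--Baxter equation becomes a fixed finite list of equalities among the $\sigma$'s and $\tau$'s indexed by pairs and triples of elements of $X$, non-degeneracy becomes a handful of \texttt{AllDifferent} constraints, the requirement that the derived rack equal $R$ is another block of equalities, and ``non-involutive'' is the condition $r^{2}\neq\id$. I would model this in Savile Row, compute $\Aut(R)$ in \GAP{} and add lexicographic symmetry-breaking constraints for its action so that each $\Aut(R)$-orbit of solutions is reported exactly once, enumerate all solutions with a CP or SAT back end, and sum over the $16023$ racks; the racks contributing no non-involutive solution — there will be many, including the trivial one — are detected automatically.

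The real obstacle is not conceptual but one of scale and, above all, of trust. First, one has to be certain that the derived-solution correspondence is iso-faithful in both directions — that no solution is attributed to two different racks, and that symmetry breaking over $\Aut(R)$ rather than over all of $\Sym(X)$ removes exactly the spurious duplicates — which requires care with the precise compatibility conditions defining the admissible $\mathfrak q$. Second, since the answer runs to nine digits, and the number of labelled solutions is far larger still, the final number is only as believable as its independent verification: I would cross-check it by re-running a sample of the per-rack computations with a different solver or a direct \GAP{} search, by a Burnside-type consistency check comparing the number of labelled solutions over each $R$ with its $\Aut(R)$-orbit count, and by verifying that the involutive solutions discarded along the way reproduce the separately obtained census of involutive solutions of size eight. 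The mathematics here is light; earning confidence in the count is the substance of the proof.
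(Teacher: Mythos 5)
Your proposal is essentially the paper's argument: the paper parametrizes non-degenerate solutions by \emph{skew cycle sets} (a rack $(X,\triangleright)$ together with a compatible binary operation $\cdot$), runs one constraint-satisfaction problem per isomorphism class of the $16023$ racks of size eight with lexicographic symmetry breaking over the stabilizer of the rack, excludes the trivial rack (which accounts exactly for the involutive solutions), and sums the per-rack counts. One correction: a solution is \emph{not} recovered from the derived rack plus a single bijection $\mathfrak q\colon X\to X$ --- the additional datum is a full binary operation (equivalently an $n\times n$ matrix of permutations $\varphi_x$), as already shown by the involutive case, where the trivial rack supports thousands of non-isomorphic solutions sharing the same diagonal map; your ``more directly'' formulation in terms of the whole families $(\sigma_x)$ and $(\tau_x)$ is the correct one and is what the paper actually implements.
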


{No previous constructions of non-involutive solutions were known.}

Our methods could be used to construct  solutions of other sizes. However, the number of non-involutive solutions of size nine 
is expected to be extremely big. With 16,023 racks of size eight we constructed 422,449,480  non-involutive solutions, so the number of non-involutive solutions of size nine is expected to be enormous, as there are 159,526 racks of size nine.

\medskip
The paper is organized as follows. In Section~\ref{IYB} we compute the number
of involutive solutions. This is done by using a constraint satisfaction
program and the language of cycle sets. The algorithm is described at the beginning of the section.  
As an application we enumerate several
types of solutions such as indecomposable, irretractable and multipermutation
solutions. We also enumerate counterexamples to a well-known conjecture of
Gateva--Ivanova~\cite{MR2095675}. Finally, in Section~\ref{YB} we use a similar algorithm and the same computational
techniques to enumerate racks, non-involutive solutions and, in particular, non-involutive biquandles. 

\section{Involutive solutions}
\label{IYB}

A solution $(X,r)$ is said to be \emph{involutive} if $r^2=\id$.  An involutive
solution $(X,r)$ is said to be \emph{irretractable} if $\tau_x\ne\tau_y$ for all $x\ne
y$. Note that this is equivalent to $\sigma_x\ne\sigma_y$ for all $x\ne y$, as
$T\sigma_xT^{-1}=\tau_x^{-1}$ holds for all $x\in X$, where $T\colon X\to X$, $T(x)=\tau^{-1}_x(x)$, see \cite[Proposition
2.2]{MR1722951}. An involutive solution $(X,r)$ 
is said to be \emph{square-free} if $T(x)=x$ for all $x\in X$, or equivalently if $r(x,x)=(x,x)$ for all $x\in X$.


If $(X,r)$ is an involutive solution, we consider over $X$ the
equivalence relation given by
\[
	x\sim y\Longleftrightarrow \tau_x=\tau_y.
\]
This equivalence relation induces an involutive solution over the set of
equivalence classes $X/{\sim}$, known as the retraction $\Ret(X,r)$ of $(X,r)$.
An involutive solution $(X,r)$ is a \emph{multipermutation} solution if there exists
$n$ such that $|\Ret^n(X,r)|=1$, where $\Ret^{n+1}(X,r)=\Ret(\Ret^n(X,r))$. 
{Multipermutation solutions 
generalize those in Example \ref{exa:permutation}.} 

The permutation group of an involutive solution $(X,r)$ is defined as the
subgroup $\mathcal{G}(X,r)$ of $\Sym_X$ generated by the set $\{\tau_x:x\in
X\}$. An involutive solution $(X,r)$ is said to be \emph{indecomposable} if the group
$\mathcal{G}(X,r)$ acts transitively on $X$ and decomposable otherwise. 

To construct all isomorphism classes of non-degenerate involutive solutions, we will use the language of cycle sets, introduced by Rump in~\cite{MR2132760}.  
A \emph{cycle set} is a pair $(X,\cdot)$, where $X$ is a set and $X\times X\to X$,
$(x,y)\mapsto x\cdot y$, is a binary operation such that the following
conditions are satisfied:
\begin{enumerate}
    \item Each map $\varphi_x\colon X\to X$, $y\mapsto x\cdot y$, is bijective, and 
    \item $(x\cdot y)\cdot (x\cdot z)=(y\cdot x)\cdot (y\cdot z)$ for all $x,y,z\in X$.
\end{enumerate}

A cycle set $(X,\cdot)$ is said to be \emph{non-degenerate} if the map $X\to X$,
$x\mapsto x\cdot x$, is bijective.  Rump proved that non-degenerate involutive
solutions are in bijective correspondence with non-degenerate cycle sets, i.e. 
\[
\{\text{non-degenerate involutive solutions}\}
\longleftrightarrow
\{\text{non-degenerate cycle sets}\}.
\]
The correspondence is given by the following formulas: If $(X,r)$ is a
solution, then $(X,\cdot)$, where $x\cdot y=\tau_x^{-1}(y)$, is a
non-degenerate cycle set.  Conversely, if $(X,\cdot)$ is a cycle set, then
$(X,r)$, where 
\[
r(x,y)=\left( (y*x)\cdot y, y*x \right) 
\]
is a non-degenerate involutive solution, where $y*x=z$ if and only if $y\cdot z=x$. 

\begin{exa}
\label{exa:size3}    
    Let $X=\{1,2,3\}$ and $r(x,y)=(\sigma_x(y),\tau_y(x))$, where
    \begin{align*}
        \sigma_1=\id, && \sigma_2=\sigma_3=(23),
        && \tau_1=\tau_2=\tau_3=(23).
    \end{align*}
    This solution is involutive, decomposable and multipermutation. 
    The map $T\colon X\to X$ is the permutation $(23)$. The cycle set associated
    to $(X,r)$ is given by the permutations
    \[
    \varphi_1=\id,\quad
    \varphi_2=(23),\quad
    \varphi_3=(23),\quad
    \]
\end{exa}

\begin{exa}
    \label{exa:size4}
    Let $X=\{1,2,3,4\}$ and $r(x,y)=(\sigma_x(y),\tau_y(x))$, where
    \begin{align*}
        \sigma_1=(34), && \sigma_2=(1324),&&\sigma_3=(1423),&&\sigma_4=(12),\\
        \tau_1=(24),&&\tau_2=(1432),&&\tau_3=(1234),&&\tau_4=(13).
    \end{align*}
    This involutive solution is irretractable, so it is not multipermutation. The permutation group of $(X,r)$ is
    $\mathcal{G}(X,r)=\langle (34),(1324),(1423),(12)\rangle\simeq\D_4$, the dihedral group of 
    order eight. This group acts transitively on $X$, so $(X,r)$ is indecomposable. Note that
    $T\colon X\to X$ is the permutation $(23)$.  The cycle set associated
    to $(X,r)$ is given by the permutations    
    \[
    \varphi_1=(24),\quad
    \varphi_2=(1234),\quad
    \varphi_3=(1432),\quad
    \varphi_4=(13).
    \]
\end{exa}

The solutions $(X,r)$ and $(Y,s)$ are isomorphic if and only if their associated cycle sets are isomorphic, which means that there is a
bijective map $f\colon X\to Y$ such that 
$f(x_1\cdot x_2)=f(x_1)\cdot f(x_2)$
for all $x_1,x_2\in X$. Note that one can write this 
formula as 
\[
f\varphi_{x}f^{-1}=\varphi_{f(x)}
\]
for all $x\in X$. 

One can translate the definitions given at the beginning of the section 
in the language of cycle sets. For example, 
the permutation group of a cycle set $(X,\cdot)$ is then defined as the group
generated by the set $\{\varphi_x:x\in X\}$, and a cycle set
is said to be indecomposable (resp. decomposable) if its permutation group acts
transitively (resp. intransitively) on $X$.  

For a cycle set $(X,\cdot)$ let $T\colon X\to X$ be the map given by 
$T(x)=x\cdot x$. By definition, the cycle set is non-degenerate if and only if the map $T$
is bijective.  In~\cite[Proposition 2.2]{MR1722951}, Etingof, Schedler and
Soloviev proved that $T$ is always bijective whenever the solution is finite,
thus finite cycle sets are regular.  This was proved independently by
Rump in~\cite{MR2132760}.

A cycle set $(X,\cdot)$, where $X=\{1,2,\dots,n\}$, 
is encoded in a table
\[
	M=(M_{i,j})_{1\leq i,j\leq n},\quad
	M_{i,j}=\varphi_{i}(j)=i\cdot j.
\]
This means that the rows of $M$ are the permutations
$\varphi_1,\dots,\varphi_n$ defining the cycle set structure on $X$. The
\emph{principal diagonal} of $M$ is precisely the bijective map $T\colon X\to X$, $x\mapsto
x\cdot x$. 

\begin{exa}
    The cycle set
    corresponding to the solution of Example \ref{exa:size3} 
    can be described by the matrix
    \[
    M=\begin{pmatrix}
    1&2&3\\
    1&3&2\\
    1&3&2
    \end{pmatrix}.
    \]
\end{exa}

\begin{exa}
    The cycle set
    corresponding to the solution of Example \ref{exa:size4} 
    can be described by the matrix
    \[
    M=\begin{pmatrix}
    1&4&3&2\\
    2&3&4&1\\
    4&1&2&3\\
    3&2&1&4
    \end{pmatrix}.
    \]
\end{exa}

To construct all involutive solutions we need to find all possible matrices $M\in\Z^{n\times n}$ with
coefficients in $\{1,2,\dots,n\}$ such that 
\begin{enumerate}
	\item for each $i$ the elements $M_{i,j}$ are all different, 
	\item the elements of the principal diagonal of $M$ are all
		different, and 
	\item $M_{M_{i,j},M_{i,k}}=M_{M_{j,i},M_{j,k}}$ holds for all $i,j,k\in\{1,\dots,n\}$. 
\end{enumerate}

Since the map $T$ is bijective, the diagonal $(M_{i,i})_{1\leq i\leq n}$ has $n$ different elements. This fact is used to reduce our search space. The general idea goes back to Plemmons~\cite{MR0258994}, but in our particular case is based on the following lemma:

\begin{lem}
	\label{lem:trick}
	Let $n\in\N$ and $(X,\cdot)$ be a cycle set of size $n$. Let $T\colon X\to
	X$, $T(x)=x\cdot x$ and $T_1\in\Sym_n$. If $T_1$ and $T$ are conjugate, then there
	exists a cycle set structure $\bullet$ on $X$ such that 
	$(X,\bullet)\simeq (X,\cdot)$ and 
	$T_1(x)=x\bullet x$
	for all $x\in X$.
\end{lem}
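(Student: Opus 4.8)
The plan is to exploit the fact that isomorphic cycle sets have conjugate $T$-maps, and conversely to manufacture the desired isomorphism from a conjugating permutation. Concretely, since $T_1$ and $T$ are conjugate in $\Sym_n$, pick $g\in\Sym_n$ with $gTg^{-1}=T_1$. The natural candidate for the new structure is the one transported along $g$: define $x\bullet y = g\bigl(g^{-1}(x)\cdot g^{-1}(y)\bigr)$. First I would check that $(X,\bullet)$ is again a cycle set: the maps $\psi_x\colon y\mapsto x\bullet y$ are conjugates $\psi_x = g\varphi_{g^{-1}(x)}g^{-1}$ of the bijections $\varphi_{g^{-1}(x)}$, hence bijective, giving condition (1); and condition (2), the cycle-set identity $(x\bullet y)\bullet(x\bullet z)=(y\bullet x)\bullet(y\bullet z)$, follows by applying $g^{-1}$ to both sides and using that $(X,\cdot)$ satisfies the identity. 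This is the routine part.

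Next I would verify that $g$ itself furnishes the isomorphism $(X,\cdot)\to(X,\bullet)$. In the formulation from the excerpt, an isomorphism is a bijection $f$ with $f(x_1\cdot x_2)=f(x_1)\bullet f(x_2)$, equivalently $f\varphi_x f^{-1}=\psi_{f(x)}$. Taking $f=g$, the left side is $g\varphi_x g^{-1}$ and the right side is $\psi_{g(x)} = g\varphi_{g^{-1}(g(x))}g^{-1}=g\varphi_x g^{-1}$, so the two agree. Hence $(X,\bullet)\simeq(X,\cdot)$.

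Finally I would compute the diagonal of the new structure: $x\bullet x = g\bigl(g^{-1}(x)\cdot g^{-1}(x)\bigr) = g\bigl(T(g^{-1}(x))\bigr) = (gTg^{-1})(x) = T_1(x)$, which is exactly the required property. Non-degeneracy of $(X,\bullet)$ is automatic since its $T$-map is $T_1\in\Sym_n$, a permutation — though in any case finiteness already guarantees regularity by the Etingof--Schedler--Soloviev/Rump result quoted above. I do not anticipate a serious obstacle here; the only thing to be careful about is consistency of conventions — whether one transports along $g$ or $g^{-1}$, and the corresponding placement of inverses — so that the conjugation $gTg^{-1}=T_1$ comes out with the orientation matching the definition of isomorphism used in the paper. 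Once the bookkeeping is fixed, every step is a short direct verification.
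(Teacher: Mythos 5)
Your argument is correct and is essentially the paper's own proof: transport the operation along a conjugating permutation ($g$ here plays the role of $\gamma^{-1}$ in the paper) and check that the diagonal becomes $T_1$. You simply spell out the routine verifications (cycle-set axioms, the isomorphism, the convention bookkeeping) that the paper leaves as ``a direct calculation.''
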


\begin{proof}
	Let $\gamma\in\Sym_n$ be such that $T_1=\gamma^{-1}T\gamma$. A direct
	calculation shows that the operation $y\bullet z=\gamma^{-1}(\gamma(y)\cdot
	\gamma(z))$ turns $X$ into a cycle set isomorphic to $(X,\cdot)$ and such
	that 
	\[
		y\bullet y=\gamma^{-1}(\gamma(y)\cdot\gamma(y))=\gamma^{-1}(T(\gamma(y)))=(\gamma^{-1}T\gamma)(y)
	\]
	holds for all $y\in X$. 
\end{proof}

Lemma~\ref{lem:trick} implies that there are only a small number of diagonals
to consider, each diagonal being a representative of a conjugacy class in the
symmetric group $\Sym_n$.  Thus the original problem is divided into $p(n)$
problems, where $p(n)$ is the number of partitions of $n$. In the particular
case of solutions of size nine, this means that there are $p(9)=30$ independent
cases to consider. For size ten, there are
$p(10)=42$ independent cases to consider. 

To construct non-isomorphic solutions 
we shall need the following notation: If $g\in\Sym_n$ and $M$ is a matrix, we
denote by $M^g$ the matrix given by
\[
	(M^g)_{i,j}=g^{-1}\left(M_{g(i),g(j)}\right)
\]
for all $i,j\in\{1,\dots,n\}$.
To avoid expensive isomorphism checking, we are interested in those matrices
$M$ such that
\begin{equation}
	\label{eq:lex}
	M\leq_{\operatorname{lex}} M^g
\end{equation}
for all $g$ in the centralizer $C_{\Sym_n}(T) $ of the permutation $T$ in $\Sym_n$, where
$\operatorname{lex}$ stands for the lexicographic ordering given by
$A\leq_{\operatorname{lex}}B$ if and only if  
\begin{multline*}
(A_{1,1},A_{1,2},\dots,A_{1,n},A_{2,1},A_{2,2},\dots,A_{n,n})\\
\leq
(B_{1,1},B_{1,2},\dots,B_{1,n},B_{2,1},B_{2,2},\dots,B_{n,n})
\end{multline*}
with lexicographical order.  This symmetry breaking is in general very hard to
implement, as in this case the number of constraints will be extremely large.
This happens for example when $T=\id$ or $T$ is a transposition. To deal with this problem, we consider the
constraint~\eqref{eq:lex} only for those permutations that belong to a certain
subset $S$ of $\Sym_n$. This is called \textit{partial symmetry breaking}, see Remark~\ref{re:Schoice} below for details.
 It should be noted that the use of proper subsets of the centralizer of $T$  produce some superfluous solutions and hence some repetitions should be removed by other
computational methods.

We briefly describe our method to remove repetitions. 
Constraint satisfaction methods produce a list of solutions. 
Among the solutions in this list, only those minimal in their orbits are needed. 
A \GAP~script 
parses the list 
and, for each
solution, checks whether or not the solution is minimal with respect to the lexicographic ordering
inside its orbit. This stage of the process is
mostly related with permutation groups and, in general,   
\GAP~deals with them 
in a friendly and very efficient way. 

The enumeration of involutive solutions of size $\leq8$ first appeared
in~\cite{MR1722951}. Table~\ref{tab:size9} shows some numbers
corresponding to solutions of size $\leq10$. New results are presented in shaded
cells. It should be noted that our numbers differ sightly from those
of~\cite[Table 1]{MR1722951}, as our table contains two solutions of size eight that are not
present in previous calculations. 

Our approach with constraint programming needs about ten minutes to construct all
those solutions of size $\leq8$ up to isomorphism.  The calculations
for solutions of size nine took less than four hours and for size ten it took several days, 
see Tables~\ref{tab:time},~\ref{tab:time_gens} and \ref{tab:time10} for some runtimes. They were both performed in an
Intel(R) Core(TM) i7-8700 CPU @ 3.20GHz, with 32GB RAM. The database of involutive solutions of size $\leq9$ needs
about 90MB. Almost 2GB are needed to store all involutive solutions of size $10$.  

\begin{table}[H]
\begin{tabular}{|r|ccccccccc|}
\hline
$n$ & 2 & 3 & 4 & 5 & 6 & 7 & 8 & 9 & 10\tabularnewline
\hline 
solutions & 2 & 5 & 23 & 88 & 595 & 3,456 & \cellcolor{gray!30}{34,530} & \cellcolor{gray!30}{321,931} &   \cellcolor{gray!30}{4,895,272}\tabularnewline
square-free & 1 & 2 & 5 & 17 & 68 & 336 & 2,041 & \cellcolor{gray!30}{15,534} & \cellcolor{gray!30}{150,957}\tabularnewline
ind. & 1 & 1 & 5 & 1 & 10 & 1 & \cellcolor{gray!30}{100} & \cellcolor{gray!30}{16} & \cellcolor{gray!30}{36}\tabularnewline
m.p. & 2 & 5 & 21 & 84 & 554 & 3,295 & \cellcolor{gray!30}{32,155} & \cellcolor{gray!30}{305,916} & \cellcolor{gray!30}{4,606,440}\tabularnewline
irretractable & 0 & 0 & 2 & 4 & 9 & 13 & 191 & \cellcolor{gray!30}{685} & \cellcolor{gray!30}{3,590}\tabularnewline
\hline
\end{tabular}
\caption{Involutive solutions of size $\leq10$.}
\label{tab:size9}
\end{table}

For size $\leq7$ our calculations coincide with those in
~\cite{MR1722951}, 
but differ by two for $n=8$ when the map $T$ an $8$-cycle (see Examples~\ref{exa:new8a} and \ref{exa:new8b} below).
We contacted the authors of ~\cite{MR1722951} regarding the aforementioned 
discrepancy and they found the missing solutions after
a re-run of their own code.

\begin{exa}
	\label{exa:new8a}
	Let $X=\{1,2,\dots,8\}$ and $r(x,y)=(\sigma_x(y),\tau_y(x))$, where 
	\begin{align*}
		&\sigma_1=\sigma_5=(16345278), && \sigma_2=\sigma_6=(12745638),\\ 
		&\sigma_3=\sigma_7=(12385674), && \sigma_4=\sigma_8=(16785234),\\
    	&\tau_1=\tau_5=(18365472), && \tau_2=\tau_6=(14765832), \\
		&\tau_3=\tau_7=(14325876), && \tau_4=\tau_8=(18725436). 
  \end{align*}
  Then $(X,r)$ is an indecomposable and multipermutation involutive solution. 
\end{exa}

\begin{exa}
	\label{exa:new8b}
	Let $X=\{1,2,\dots,8\}$ and $r(x,y)=(\sigma_x(y),\tau_y(x))$, where 
	\begin{align*}
	 &\sigma_1=\sigma_5=(1278)(3456), && \sigma_2=\sigma_6=(1238)(4567),\\
	 &\sigma_3=\sigma_7=(1234)(5678), && \sigma_4=\sigma_8=(1678)(2345),\\
     &\tau_1=\tau_5=(1832)(4765), && \tau_2=\tau_6=(1432)(5876),\\
	 &\tau_3=\tau_7=(1876)(2543), && \tau_4=\tau_8=(1872)(3654).
	\end{align*}
  Then $(X,r)$ is an indecomposable and multipermutation involutive solution. 
\end{exa}

\begin{rem}
The involutive solutions of Examples~\ref{exa:new8a} and~\ref{exa:new8b} are multipermutation and indecomposable solutions. This means that there are 34,530 solutions of size eight, 
100 of them are indecomposable   
and 39 are multipermutation and indecomposable. 
\end{rem}

\begin{rem}\label{re:Schoice}
As mentioned before, for some diagonals $T$ the centralizer
turns out to be too big for 
our computational resources.
A sample $S$ of elements of $C_{\Sym_n}(T)$ is to be chosen
to make the constraint computations feasible.
To construct solutions of size
$n\in\{9,10\}$,
taking $S$ as the full centralizer $C_{\Sym_n}(T)$ of the permutation $T$ in $\Sym_n$ works well for small
centralizers.
For big centralizers, as it is the case when $T=\id$
or a transposition, the standard heuristic local search suggests to look at
the subset of $C_{\Sym_n}(T)$ consisting of permutations moving 
a small number of points of $\{1,2,\dots,n\}$ (at most three usually suffices), 
as most violations of the minimality condition involve few entries of the matrix. 
We also include a small generating set of $C_{\Sym_n}(T)$, since we do not want to lose information by 
inadvertently ignoring permutations that change 
certain labels.
These particular choices of sets $S$ work  well in our setting and allow us to construct solutions in a reasonable time.
{The partial symmetry breaking technique described in this paragraph 
and some of its variations were studied by 
McDonald and Smith in 
\cite{mcdonald2002partial} and the automation of these techniques were studied by Jefferson and Petrie in \cite{10.1007/978-3-642-23786-7_55}}.
\end{rem}

\begin{table}[H]
	\begin{tabular}{|c|c|c|c|}
		\hline 
		$n$ & $T$ & Solutions & CPU time  \tabularnewline
		\hline 
 		9 & (123456789)& 9 & 3 minutes\tabularnewline
 		& (12345678)& 104 & 6 minutes \tabularnewline 
		& (1234567) & 35  & 2 minutes \tabularnewline
		& (123456) & 1,176 & 2 minutes  \tabularnewline
		
        \hline 
		10 & (123456789a) & 20 & 10 hours\tabularnewline
        & (123456789) & 81 & 11 hours\tabularnewline
        & (12345678) & 720 & 9 hours\tabularnewline
        & (1234567) & 238  & 2 hours\tabularnewline
        & (123456) & 9,103 & 2 hours\tabularnewline
        \hline
\end{tabular}
\caption{Some runtimes for constructing involutive solutions of size $n\in\{9,10\}$ with  $S=C_{\Sym_n}(T)$. In these cases there is no need to check if some solutions are isomorphic.}
\label{tab:time}
\end{table}

\begin{table}[H]
	\begin{tabular}{|c|c|c|c|}
		\hline 
		$n$ & $T$ & Solutions & CPU time  \tabularnewline
		\hline 
 		9 & (12345)& 780 & 2 minutes\tabularnewline
 		& (1234) & 11,320 & 3 minutes\tabularnewline 
 		& (123)& 13,061 & 4 minutes \tabularnewline
 		& (12)(34)(56)(78) & 24,345 & 6 minutes
 		\tabularnewline
 		& (12)(34)(56) & 52,866 & 4 minutes\tabularnewline
		& (12)(34) &  61,438 & 8 minutes \tabularnewline
		& (12) & 41,732 & 50 minutes  \tabularnewline
        \hline
\end{tabular}
\caption{Some runtimes for constructing involutive solutions of size nine 
with $S$ being a generating set of $C_{\Sym_n}(T)$.}
\label{tab:time_gens}
\end{table}

\begin{table}[H]
	\begin{tabular}{|c|c|c|c|}
		\hline 
		$n$ & $T$ & Solutions & CPU time  \tabularnewline
		\hline
		9 & (12345) & 780 & 1 minute\tabularnewline
		& (1234) & 11,320 & 1 minute\tabularnewline
	    & (123) & 13,061 & 2 minutes \tabularnewline
		& (12)(34)(56)(78) & 24,345 & 17 minutes\tabularnewline
		& (12)(34)(56) & 52,866 & 9 minutes\tabularnewline
		& (12)(34) & 61,438 & 7 minutes  \tabularnewline
		& (12) & 41,732 & 11 minutes \tabularnewline
		& $\id$ & 15,534 & 1 hour 20 minutes \tabularnewline
		\hline 
		10 & (123) & 143,267 & 2 days \tabularnewline
		 & (12)(34)(56)(78)(9a) & 178,782 & 2 days 7 hours\tabularnewline 
		& (12)(34)(56)(78) & 560,592 & 2 days \tabularnewline
		& (12)(34)(56) & 855,536 & 10 hours\tabularnewline
		& (12)(34) & 807,084  & 8 hours  \tabularnewline
		& (12) & 474,153 & 17 hours \tabularnewline
		& $\id$ & 150,957 & 6 days\tabularnewline
		\hline 
\end{tabular}
\caption{Some runtimes for constructing involutive solutions of size $n\in\{9,10\}$. In these cases $S$ is the set of permutations of $C_{\Sym_n}(T)$ that move $\leq3$ points.}
\label{tab:time10}
\end{table}

In~\cite{MR2095675} Gateva--Ivanova conjectured that all finite square-free solutions
are retractable.  Despite the fact that the conjecture holds in several cases
(see \cite{MR4062375,MR2652212,MR2885602,MR3935814}) a counterexample was found in
\cite{MR3437282}. From a given counterexample one then constructs other counterexamples by
different methods, see~\cite{MR3719300,MR4107577}. It turns out that 
constructing essentially 
new counterexamples to the conjecture seems to be quite challenging. 

For $n\in\N$ let $g(n)$ be the number of isomorphism classes of counterexamples
to Gateva--Ivanova conjecture. Computer calculations show that $g(n)=0$ if $n\leq 7$. Other values of $g(n)$ are shown in Table~\ref{tab:g(n)}. 

\begin{table}[H]
\begin{tabular}{|c|cccc|}
\hline
$n$ & 8 & 9 & 10 & 11\tabularnewline
\hline
$g(n)$ & 1 & 5 & 12 & 23\tabularnewline
\hline
\end{tabular}

\caption{Some values of $g(n)$.}
\label{tab:g(n)}
\end{table}

The determination of the exact value of $g(9)$ took about 7 minutes, $g(10)$ took
3 hours  and $g(11)$ took four days. 
The calculations were performed in an 
Intel(R) Xeon(R) CPU
E5-2670, 2.60GHz, with 32GB RAM. 

\section{Non-involutive solutions}
\label{YB}

The method presented in Section~\ref{IYB} is now used to compute non-involutive
solutions. This time, we translate the problem into the language of skew cycle
sets. First we need basic definitions of the theory of racks.  

\subsection{Racks}
\label{racks}
A \emph{rack} is a pair $(X,\triangleright)$, where $X$ is a set and $X\times X\to X$, $(x,y)\mapsto x\triangleright y$, is a binary operation on $X$ such that the
following conditions are satisfied:
\begin{enumerate}
	\item Each map $X\to X$, $y\mapsto x\triangleright y$ is
		bijective, and
	\item $x\triangleright (y\triangleright z)=(x\triangleright
		y)\triangleright (x\triangleright z)$ for all $x,y,z\in X$.
\end{enumerate}

We can use the ideas presented in the previous section to construct finite racks up to isomorphisms.
However, algorithms to construct and enumerate finite racks of small size are
already known, see for example in~\cite{MR3665565,
MR3118951,MR3904151,MR3957904}. 

As we need racks to construct arbitrary solutions to the YBE, it is convenient 
to recall that the construction problem for racks can be formulated as follows: We need to find
all matrices $R\in\Z^{n\times n}$ with coefficients in $\{1,2,\dots,n\}$ such that 
\begin{enumerate}
	\item for each $i$ the elements $R_{i,j}$ are all different, 
	\item the elements of the principal diagonal of $R$ are all
		different, and 
	\item $R_{i,R_{j,k}}=R_{R_{i,j},R_{i,k}}$ holds for all $i,j,k\in\{1,\dots,n\}$.
\end{enumerate}

To construct racks we can use the trick of considering only representatives of
conjugacy classes of the diagonal and then keep only those matrices which are
minimal in their orbits, with respect to the lexicographical order. 

For $n\in\N$, let $r(n)$ be the number of isomorphism classes of racks of size
$n$. Some values of $r(n)$ appear in Table~\ref{tab:racks}. These values of $r(n)$ were computed by our method based on constraint programming. A better approach to the enumeration of racks of small size appears in ~\cite{MR3957904}.  

\begin{table}[H]
\begin{tabular}{|c|cccccccc|}
\hline
$n$ & 2 & 3 & 4 & 5 & 6 & 7 & 8 & 9\tabularnewline
\hline
$r(n)$ & 2 & 6 & 19 &74&353 & 2,080 & 160,23 & 159,526\tabularnewline
\hline
\end{tabular}
\caption{Enumeration of racks.}
\label{tab:racks}
\end{table}

\subsection{Non-involutive solutions}

The theory of cycle sets can be generalized to deal with
non-involutive solutions to the YBE, see for example~\cite{MR3881192}. 
A \emph{skew cycle set} is a triple
$(X,\cdot,\triangleright)$, where $(X,\triangleright)$ is a rack and $X\times
X\to X$, $(x,y)\mapsto x\cdot y$, is a binary operation such that
\begin{enumerate}
	\item The maps $\varphi_x\colon X\to X$, $y\mapsto x\cdot y$, are
		bijective, 
	\item $\displaystyle{(x\cdot (x\triangleright y))\cdot (x\cdot z)=(y\cdot
		x)\cdot (y\cdot z)}$ for all $x,y,z\in X$, and
	\item $\displaystyle{x\cdot (y\triangleright z)=(x\cdot y)\triangleright
			(x\cdot z)}$ for all $x,y,z\in X$. 
\end{enumerate}
As it happens in the involutive case, finite solutions to the YBE are in
bijective correspondence with skew cycle sets, i.e. 
\begin{equation}
	\label{correspondence}
\{\text{non-degenerate solutions}\}
\longleftrightarrow
\{\text{non-degenerate skew cycle sets}\}
\end{equation}
The correspondence is given as follows. If $(X,r)$ is a solution, 
then the skew cycle set on $X$ is given by
\begin{align*}
	&x\cdot y=\tau_x^{-1}(y),
	&&x\triangleright y=\tau_x\sigma_{\tau_y^{-1}(x)}(y).
\end{align*}
Conversely, if $X$ is a skew cycle set, then 
\[
	r(x,y)=\left( (y*x)\cdot ( (y*x)\triangleright y),y*x \right)
\]
is a solution, where $y*x=z$ if and only if $y\cdot z=x$.  We refer
to~\cite{MR3974961} for more information on the interaction between solutions
and their associated racks. 

\begin{rem}
	Under the bijective correspondence~\eqref{correspondence}, involutive
	solutions correspond to the cycle sets defined in Section \ref{IYB}. 
\end{rem}

We now translate the problem of constructing all
finite solutions into a problem suitable for constraint
programming. Given a matrix $R$ corresponding to a rack of size $n$, we want to
find all possible matrices $M\in\Z^{n\times n}$ with coefficients in
$\{1,2,\dots,n\}$ such that 
\begin{enumerate}
	\item for each $i$ the elements $M_{i,j}$ are all different, 
	\item the elements of the principal diagonal of $M$ are all
		different, 
	\item $M_{M_{i,R_{i,j}},M_{i,k}}=M_{M_{j,i},M_{k,l}}$ holds for all $i,j,k\in\{1,\dots,n\}$, and
	\item $M_{i,R_{j,k}}=R_{M_{i,j},M_{i,k}}$ for all $i,j,k\in\{1,\dots,n\}$. 
\end{enumerate}

We can exclude the trivial rack from our algorithm, as involutive solutions
were computed in Section~\ref{IYB}. It only remains to deal with the isomorphism problem. 
Thus we are interested in those matrices $M$ such that
\[
	M\leq_{\operatorname{lex}} M^g
\]
for all $g$ in the stabilizer of the rack $R$, where $\operatorname{lex}$
stands for the lexicographic ordering on matrices described in
Section~\ref{IYB}.  This symmetry breaking is in general easy to implement, as
stabilizers of racks tend to be small.  

For $n\in\N$ let $s(n)$ be the number of isomorphism classes of non-involutive solutions of size $n$. We summarize our calculations in
Table~\ref{tab:non_involutive}.

\begin{table}[H]
\begin{tabular}{|c|ccccccc|}
\hline
$n$ & 2 & 3 & 4 & 5 & 6 & 7 & 8\tabularnewline
\hline
$s(n)$ & 2 & 21 & 230 & 3,519 & 100,071 & 4,602,720 & 422,449,480\tabularnewline
\hline
\end{tabular}
\caption{Enumeration of non-involutive solutions.}
\label{tab:non_involutive}
\end{table}

The calculations for $s(n)$ for all $n\leq 6$ took about 10 minutes, $s(7)$ needed 2 hours 
and 17 minutes and $s(8)$ took about one day. The database of non-involutive solutions 
needs about 750MB for solutions of size $\leq7$ and 
around 100GB for solutions of size eight. 

\subsection{Non-involutive biquandles} 

Recall that a \emph{biquandle} is a solution such that
its associated rack is a quandle, that means that  \[
x\triangleright x=\tau_x\sigma_{\tau_x^{-1}(x)}(x)=x
\]
for all $x\in X$. In particular, 
involutive solutions are biquandles. Enumeration of biquandles of small size  appear in~\cite{MR2819176,MR3666513,MR2246021}. 

For $n\in\N$ let $b(n)$ be the number of isomorphism classes of non-involutive biquandles of size $n$. 
The enumeration of non-involutive biquandles
appear in Table~\ref{tab:biquandles}.

\begin{table}[H]
\begin{tabular}{|c|cccccc|}
\hline
$n$ & 3 & 4 & 5 & 6 & 7 & 8\tabularnewline
\hline
$b(n)$ & 10 & 75 & 974 & 18,548 & 621,414 & 37,836,551\tabularnewline
\hline
\end{tabular}
\caption{Enumeration of non-involutive biquandles.}
\label{tab:biquandles}
\end{table}










\subsection*{Acknowledgments} 
We thank A. Bartholomew, M. Farinati, P. Etingof and T. Schedler for useful conversations. 
The second author is partially supported by PICT 2018-3511
and is also a Junior Associate of the ICTP. The third  author 
acknowledges support of 
NYU-ECNU Institute of Mathematical Sciences at NYU--Shanghai and he 
is supported in part by PICT 2016-2481 and UBACyT 20020170100256BA.


%

\bibliographystyle{abbrv}
\bibliography{refs}

\end{document}